\newtheorem{lemma}{Lemma}[section]
\newtheorem{proposition}{Proposition}[section]
\newtheorem{Properties}{Properties}[section]
\newtheorem{theorem}{Theorem}[section]
\newtheorem{remark}{Remark}[section]
\theoremstyle{definition}
\theoremstyle{remark}
\title{On the exact divisibility by $5$ of the class number of\\ some pure metacyclic fields}
\date{}
\begin{document}

\maketitle

\begin{center}
{\sc Fouad ELMOUHIB }\\
{\footnotesize Department of Mathematics and Computer Sciences,\\
Mohammed first University, Oujda, Morocco,\\
Correspondence: fouad.cd@gmail.com}\\
\vspace{0.7cm}
{\sc Mohamed TALBI }\\
{\footnotesize Regional Center of Professions of Education and Training,\\
ksirat1971@gmail.com}\\

\vspace{0.7cm}
{\sc Abdelmalek AZIZI }\\
{\footnotesize Department of Mathematics and Computer Sciences,\\
Mohammed first University, Oujda, Morocco,\\
abdelmalekazizi@yahoo.fr}
\end{center}

\begin{abstract}
Let $\Gamma \,=\, \mathbb{Q}(\sqrt[5]{n})$ be a pure quintic field, where $n$ is a natural number $5^{th}$ power-free. Let $k = \mathbb{Q}(\sqrt[5]{n}, \zeta_5)$, with $\zeta_5$ is a primitive  $5^{th}$ root of unit, be the normal closure of $\Gamma$, and  a pure metacyclic field of degree $20$ over $\mathbb{Q}$. When $n$ takes some particular forms, we show that $\Gamma$ admits a trivial $5$-class group and $5$ divides exactly the class number of $k$.
\end{abstract}

\section{Introduction}
In algebraic number theory, the task to determine the class number of a number field $k$ is until nowday a classical and difficult open problem, although we have explicit formulas and relations of class numbers.\\
In the case when this task is hard to deal with it, its seems satisfactory to determine the exact power of a prime $p$, which divides the class number of $k$.\\
Let $\Gamma = \mathbb{Q}(\sqrt[5]{n})$ be a pure quintic field, where $n$ is a $5^{th}$ power-free natural greater than one, and $k_0 = \mathbb{Q}(\zeta_5)$ be the $5^{th}$ cyclotomic field. Then $k = \Gamma(\zeta_5)$ is the normal closure of $\Gamma$ and a pure metacyclic field. There is a question when the class numbers $h_\Gamma$ of $\Gamma$ and $h_k$ of $k$ are divisible by $5$. This divisibility was studied by several researchers until now. Subsequently to Honda's study of the cubic case \cite{Honda}, C.Parry \cite{Pa} studied with difficulty this question. He presented the relation between $h_\Gamma$ and $h_k$:\\
\centerline{$5^5h_k = uh_\Gamma^4$}\\
where $u$ is a divisor of $5^6$. He also proved that $h_\Gamma$ is divisible by $5$ if and only if $h_k$ is divisible by $5^2$. The authors of \cite{Mani} gave a several results on the rank of the $5$-Sylow subgroup of the ideal class group of $\Gamma$ and $k$, by means of tools of genus theory and Kummer duality.\\
Due to an investigation of the theory developed in \cite{Mani}, we gave in \cite{FOU1} a full classification of all normal closures $k$ whenever its $5$-class group is of type $(5, 5)$, and in order to illuminate this classification by numerical examples obtained by PARI/GP \cite{PRI}, we have noticed that for large values of some forms of $n$, the $5$-class group of $k$ is cyclic of order $5$, which means that $h_k$ is exactly divisible by $5$. The done calculations allow us to conjecture that this divisibility is verified for these forms of $n$, see [\cite{FOU1}, conjecture 4.1].\\
In this paper, we are interested in giving a proof of this conjecture. In fact we shall prove the following main theorem:

\begin{theorem} \label{theo 1.1}
Let $\Gamma\,=\,\mathbb{Q}(\sqrt[5]{n})$ be a pure quintic field, where $n$ is a natural $5^{th}$ power-free. Let $k$ be the normal closure of $\Gamma$. Denote by $h_{\Gamma, 5}$ and $h_{k, 5}$ the $5$-class number respectively of $\Gamma$ and $k$. Let $q_1$ and $q_2$ be primes such that $q_1, q_2\,\equiv \, \pm2\, (\mathrm{mod}\, 5)$. If the natural $n$ takes one forms as follows:
\begin{equation}
 n\,=\,\left\lbrace
   \begin{array}{ll}
   
     q_1^{e_1}q_2 & \text{ with } \quad q_i\,\equiv \,\pm7\, (\mathrm{mod}\, 25)\\

     5q_1  & \text{ with }\quad q_1\,\equiv \,\pm7\, (\mathrm{mod}\, 25)\\
   
     5q_1q_2 & \text{ with } \quad q_1 \,\text{ or }\, q_2$  $\not\equiv \,\pm7\, (\mathrm{mod}\, 25)\\

   \end{array}
   \right.
\end{equation}
and $q_2$ and $5$ are not a quintic residue modulo $q_1$, then $h_{\Gamma, 5}$ is trivial and $h_{k, 5} = 5$.
\end{theorem}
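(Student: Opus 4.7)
The argument hinges on reducing the exact value of $h_{k,5}$ to two independent pieces of information via Parry's relation. Writing $5^{5} h_{k} = u\, h_{\Gamma}^{4}$ in $5$-adic valuation gives $v_{5}(h_{k}) = v_{5}(u) + 4\,v_{5}(h_{\Gamma}) - 5$ with $0 \leq v_{5}(u) \leq 6$. Consequently, once $v_{5}(h_{\Gamma}) = 0$ is established, $v_{5}(h_{k})$ is forced to lie in $\{0,1\}$, and any evidence that $5 \mid h_{k}$ pins it down to $v_{5}(h_{k}) = 1$, i.e.\ $h_{k,5} = 5$. Hence my plan splits into two tasks: (i) prove triviality of the $5$-class group of $\Gamma$ for each of the three shapes of $n$; (ii) exhibit a non-trivial $5$-class in $k$ in the same generality.

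For task (i), I would specialize the rank formulas for the $5$-Sylow subgroups of $Cl_{\Gamma}$ developed in \cite{Mani} and further refined in \cite{FOU1}. The number of primes ramified in $\Gamma/\mathbb{Q}$ together with the index of norms of units controls the genus rank, and the hypotheses $q_{i} \equiv \pm 7 \pmod{25}$ select exactly those inert primes of $\mathbb{Q}(\zeta_{5})$ whose classes $\bmod\,25$ are $5$-th powers distinct from $\pm 1$. This should kill the ambiguous part of the $5$-class group of $\Gamma$. The non-quintic-residue conditions ``$q_{2}$ and $5$ are not quintic residues modulo $q_{1}$'' are the Kummer obstructions that prevent additional $5$-capitulation from the normal closure, so no class survives; together, these give $h_{\Gamma,5} = 1$.

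For task (ii), I would apply Chevalley's ambiguous class number formula to the Kummer extension $k/k_{0}$, where $k_{0} = \mathbb{Q}(\zeta_{5})$ has class number $1$. Writing $\sigma$ for a generator of $\mathrm{Gal}(k/k_{0})$,
\[
|Cl_{k}^{\sigma}| \;=\; \frac{\prod_{\mathfrak{p}} e_{\mathfrak{p}}}{5 \cdot [E_{k_{0}} : E_{k_{0}} \cap N_{k/k_{0}}(k^{\times})]},
\]
with the product ranging over primes of $k_{0}$ ramified in $k$. Using the prescribed decompositions of $q_{1}, q_{2}$ and of $5$ in $k_{0}$ (which depend on $q_{i} \bmod 5$ and on $n \bmod 25$), a case-by-case count of the ramified primes, combined with a computation of the norm unit index on $E_{k_{0}} = \langle \zeta_{5}\rangle \times \langle \varepsilon_{0}\rangle$, should force this quotient to be divisible by $5$, hence $5 \mid h_{k}$.

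The main obstacle I expect is the norm unit index $[E_{k_{0}} : E_{k_{0}} \cap N_{k/k_{0}}(k^{\times})]$: proving that it takes the predicted value in all three shapes of $n$ requires controlling the local norm residue (Hilbert) symbols $(\varepsilon, n)_{\mathfrak{p}}$ at the primes of $k_{0}$ lying above $5$ and above $q_{1}$. This is precisely where the delicate ``$\pm 7 \pmod{25}$'' congruences and the assumption that $q_{2}$ and $5$ are not quintic residues modulo $q_{1}$ enter in an essential way. Once this index is pinned down, the ambiguous formula yields $5 \mid h_{k}$, and combining with (i) through Parry's valuation identity yields the exact equality $h_{k,5} = 5$.
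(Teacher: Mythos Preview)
Your high-level strategy matches the paper's: establish $h_{\Gamma,5}=1$ via the rank machinery of \cite{Mani}, show $5\mid h_k$ via the ambiguous class number formula for the Kummer extension $k/k_0$, and close with Parry's relation. Task~(ii) as you describe it is exactly Proposition~\ref{prop 3.1}: the paper computes $d$ (number of ramified primes of $k_0$) and $q^*$ (the norm-unit defect) to obtain $\operatorname{rank} C_{k,5}^{(\sigma)}=d+q^*-3=1$ in each of the three cases, and the $\pm 7\pmod{25}$ congruences are precisely what pin down $q^*$ through Lemma~\ref{lem 3.1}.

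Where your plan goes wrong is in the placement of the non-quintic-residue hypotheses. They play \emph{no role} in the norm-unit index or in Chevalley's formula for $k/k_0$; that computation is governed entirely by the $\pm 7\pmod{25}$ data. The conditions ``$q_2$ and $5$ are not quintic residues modulo $q_1$'' enter only in task~(i), and not through any capitulation argument but through a concrete Hilbert-symbol matrix. The paper uses the bound $\operatorname{rank} C_{\Gamma,5}\le t-s$ from \cite{Mani} (valid here because no prime $p\equiv 1\pmod 5$ divides $n$, so $\operatorname{rank}(C_{k,5}/C_{k,5}^{1-\sigma})^{+}=0$), with $t=\operatorname{rank} C_{k,5}^{(\sigma)}=1$. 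The quantity $s$ is the rank of the $1\times g$ matrix of symbols $\bigl(\tfrac{x_1,n}{\pi_j}\bigr)_5$, where $k^*=k(\sqrt[5]{x_1})$ is the genus field (Theorem~\ref{theo 4.1}). After identifying $x_1$ via \cite{Limura} and expanding by bilinearity, the non-vanishing of an entry reduces exactly to $\bigl(\tfrac{q_2}{q_1}\bigr)_5\neq 1$ or $\bigl(\tfrac{5}{q_1}\bigr)_5\neq 1$, which is the non-quintic-residue hypothesis; this gives $s=1$ and hence $\operatorname{rank} C_{\Gamma,5}\le 0$. Your outline for task~(i) lacks this mechanism (and note that $\Gamma/\mathbb{Q}$ is not Galois, so there is no direct genus/ambiguous-class formula for $\Gamma$ itself), so as written it does not actually establish $h_{\Gamma,5}=1$.
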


This result will be underpinned by numerical examples obtained with the computational number theory system PARI/GP [\cite{PRI}].
 
\section{Norm residue symbol}
Let $L/K$ an abelian extension of number fields with conductor $f$. For each finite or infinite prime ideal $\mathcal{P}$ of $K$, we note by $f_{\mathcal{P}}$ the largest power of $\mathcal{P}$ that divides $f$. Let
$\beta \in K^{*}$, we determine an auxiliary number $\beta_0$ by the two conditions $\beta_0\equiv\beta\,(\mathrm{mod}\, f_\mathcal{P})$ and $\beta_0\equiv1\,(\mathrm{mod}\, \frac{f}{f_\mathcal{P}})$. Let $\mathcal{Q}$ an ideal co-prime with $\mathcal{P}$ such that $(\beta_0) = \mathcal{P}^a\mathcal{Q}$ ($a=0$ if $\mathcal{P}$ infinite). We note by
\begin{center}
\large{ $\left(\frac{\beta,L}{\mathcal{P}} \right) =  \left(\frac{L/K}{\mathcal{Q}} \right)$}
\end{center}
the Artin map in $L/K$ applied to $\mathcal{Q}$.\\
Let $K$ be a number field containing the $m^{th}$-roots of units, where $m\in \mathbb{N}$, then for each $\alpha,\beta \in K^{*}$ and prime ideal $\mathcal{P}$ of $K$, we define the norm residue symbol by:
\begin{center}
\large{ $\left(\frac{\beta,\alpha}{\mathcal{P}} \right)_m =  \frac{\left(\frac{\beta,K(\sqrt[m]{\alpha})}{\mathcal{P}} \right)\sqrt[m]{\alpha}}{\sqrt[m]{\alpha}}$}
\end{center}
Therefore, if the prime ideal $\mathcal{P}$ is unramified in the field $K(\sqrt[m]{\alpha})$, then we write:
\begin{center}
\large{ $\left(\frac{\alpha}{\mathcal{P}} \right)_m =  \frac{\left(\frac{K(\sqrt[m]{\alpha})}{\mathcal{P}} \right)\sqrt[m]{\alpha}}{\sqrt[m]{\alpha}}$}
\end{center}
\begin{remark}
\large{Notice that $\left(\frac{\beta,\alpha}{\mathcal{P}} \right)_m$ and $\left(\frac{\alpha}{\mathcal{P}} \right)_m$} are two $m^{th}$-roots of unit.
\end{remark}
\large{Following \cite{Hass}, the principal properties of the norm residue symbol are given as follows:}
\begin{Properties}\label{normprop}
\large{\item[$(1)$] $\left(\frac{\beta_1\beta_2,\alpha}{\mathcal{P}} \right)_m = \left(\frac{\beta_1,\alpha}{\mathcal{P}} \right)_m\left(\frac{\beta_2,\alpha}{\mathcal{P}} \right)_m$};

\large{\item[$(2)$] $\left(\frac{\beta,\alpha_1\alpha_2}{\mathcal{P}} \right)_m = \left(\frac{\beta,\alpha_1}{\mathcal{P}} \right)_m\left(\frac{\beta,\alpha_2}{\mathcal{P}} \right)_m$};

\large{\item[$(3)$] $\left(\frac{\beta,\alpha}{\mathcal{P}} \right)_m = \left(\frac{\alpha,\beta}{\mathcal{P}} \right)_m^{-1}$};

\item[$(4)$] If $\mathcal{P}$ is not divisible by the conductor $f(\sqrt[m]{\alpha})$ of $K(\sqrt[m]{\alpha})$ and appears in $(\beta)$ with the exponent b, then: 
\large{$\left(\frac{\beta,\alpha}{\mathcal{P}} \right)_m = \left(\frac{\alpha}{\mathcal{P}} \right)_m^{-b}$  };

\large{\item[$(5)$] $\left(\frac{\beta,\alpha}{\mathcal{P}} \right)_m = 1$ if and only if $\beta$ is norm residue of $K(\sqrt[m]{\alpha})$ modulo $f(\sqrt[m]{\alpha})$  };

\large{\item[$(6)$] $\left(\frac{\tau\beta,\tau\alpha}{\tau\mathcal{P}} \right)_m = \tau\left(\frac{\beta,\alpha}{\mathcal{P}} \right)_m$ for each automorphism $\tau$ of $K$ };

\large{\item[$(7)$] ${\displaystyle \prod_{\mathcal{P}} \left(\frac{\beta,\alpha}{\mathcal{P}} \right)_m} = 1$} for all finite or infinite prime ideals;

\item[$(8)$]If $K'$ is a finite extension of $K$, $\alpha \in K^{*},\beta' \in K'$ then: 
\begin{center}
\large{${\displaystyle \prod_{\mathcal{P'}|\mathcal{P}} \left(\frac{\beta',\alpha}{\mathcal{P'}} \right)_m} =  \left(\frac{\mathcal{N}_{K'/K}(\beta'),\alpha}{\mathcal{P}} \right)_m$}
\end{center}

\item[$(9)$]Let $\alpha,\beta \in K^{*}$ and the conductors $f(\sqrt[m]{\alpha})$, $f(\sqrt[m]{\beta})$ of respectively $K(\sqrt[m]{\alpha})$, $K(\sqrt[m]{\beta})$ are co-prime then, the classical reciprocity law:
\begin{center}
\large{$\left(\frac{\beta}{(\alpha)} \right)_m = \left(\frac{\alpha}{(\beta)} \right)_m$}
\end{center}

\end{Properties}

\large{For more basic properties of the norm residue symbol in the number fields, we refer the
reader to \cite{Hass}.\\
Notice that in the rest of the article, we will use the norm quintic residue symbols $(m = 5)$. If we deal with a principal ring of integer, we will write the norm quintic residue symbol as follows:}
\begin{center}
\large{$\left(\frac{\beta,\alpha}{(\pi)} \right)_5 = \left(\frac{\beta,\alpha}{\pi} \right)_5$ and $\left(\frac{\alpha}{(\pi)} \right)_5 = \left(\frac{\alpha}{\pi} \right)_5$}
\end{center}

\section{Ambiguous ideal classes}
Let $\Gamma = \mathbb{Q}(\sqrt[5]{n})$ be a pure quintic field, where $n$ is a natural $5^{th}$ power-free, and $k_0 = \mathbb{Q}(\zeta_5)$ be the cyclotomic field generated by a primitive $5^{th}$ root of unit $\zeta_5$. Then $k = \Gamma(\zeta_5)$ is the normal closure of $\Gamma$, and a cyclic Kummer extension of degree $5$ of $k_0$. By $C_{k, 5}$ we denote the $5$-ideal class group of $k$, and by $C_{k, 5}^{(\sigma)}$ the subgroup of ambiguous ideal classes under the action of $Gal(k/k_0) = \langle \sigma \rangle$.\\
In [\cite{Hass2}, Theorem 13], Hasse specifies rank $C_{k, 5}^{(\sigma)}$ as follows:\\
\centerline{rank $C_{k, 5}^{(\sigma)}$ = $d+q^*-(r+1+o)$}\\
where\\
$\bullet$ $d =$ number of ramified primes in $k/k_0$.\\
$\bullet$ $r =$ rank of the free abelian part of the group of units $E_{k_0}$ of $k_0$, so $r = 1$.\\
$\bullet$ $o = 1$ because $k_0$ contains a primitive $5^{th}$ root of unit.\\
$\bullet$ $q^*$ is defined by $[N_{k/k_0}(k-\{0\})\cap E_{k_0} : N_{k/k_0}(E_{k_0})] = 5^{q^*}$. Here $N_{k/k_0}$ is the relative norm from $k$ to $k_0$.\\
We obtain that  rank $C_{k, 5}^{(\sigma)}$ = $d+q^*-3$.\\
We note that $ N_{k/k_0}(E_{k_0}) = E_{k_0}^5$ and $[E_{k_0} : E_{k_0}^5] = 5^2$, so we get that $q^* \in \{0, 1, 2\}$.\\
The group $E_{k_0}$ of units is generated by $\zeta_5$ and $\zeta_5+1$, then according to the definition of $q^*$, we see that:
\begin{center}
$q^*$ = $ \begin{cases}
2 & \text{if }\, \zeta, \zeta+1 \in N_{k/k_0}(k^*),\\
1 & \text{if }\, \zeta^i(\zeta+1)^j \in N_{k/k_0}(k^*)\,\text{ for some i and j },\\
0 & \text{if }\, \zeta^i(\zeta+1)^j \notin N_{k/k_0}(k^*)\, \text{for}\hspace{2mm} 0\leq i,j\leq 4 \text{ and}\hspace{2mm} i+j\neq 0.\\
\end{cases}$
\end{center}
The following lemma gives us some results, which allow us to determine the value of $q^*$.

\begin{lemma}\label{lem 3.1}
Let $k_0 = \mathbb{Q}(\zeta_5)$ and let $k =k_0(\sqrt[5]{n})$, where $n = u\lambda^{e_\lambda}\pi_1^{e_1}....\pi_g^{e_g}$, with $u$ is a unit in $\mathbb{Z}[\zeta_5]$, $\lambda = 1-\zeta_5$ is the unique prime over $5$, and $\pi_1,...,\pi_g$ prime elements in $\mathbb{Z}[\zeta_5]$. Then:\\
$\bullet$ $\zeta_5\, \in N_{k/k_0}(k-\{0\})\, \Longleftrightarrow\, N_{k_0/\mathbb{Q}}((\pi_i))\, \, \equiv \, 1 \,(\mathrm{mod}\,25)$ for all i.\\
$\bullet$ $\zeta_5^i(\zeta_5+1)^j\, \in N_{k/k_0}(k-\{0\})\, \Longleftrightarrow\,$  
every $\pi | x$ above has the property that $\zeta_5^i(\zeta_5+1)^j$ is a $5^{th}$ power modulo $(\pi)$ in $\mathbb{Z}[\zeta_5]$ for all $i, j$.\\
$\bullet$ $(\lambda)$ ramifies in $k/k_0$  $\Longleftrightarrow\,$ $n \not\equiv \, \pm1, \pm7 \,(\mathrm{mod}\,\lambda^5)$.\\
\end{lemma}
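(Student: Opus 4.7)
\medskip

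\noindent\textit{Proof plan.} The argument combines Hasse's norm theorem for the cyclic degree-$5$ extension $k/k_0$ with the calculus of the quintic norm residue symbol recorded in Properties~\ref{normprop}. By item~$(5)$ together with the Hasse norm principle, an element $\beta\in k_0^{\ast}$ belongs to $N_{k/k_0}(k^{\ast})$ if and only if
$\left(\tfrac{\beta,n}{\mathcal P}\right)_5=1$
for every prime $\mathcal P$ of $k_0$.

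For (i) and (ii) I will take $\beta=\zeta_5$ or $\beta=\zeta_5^{i}(\zeta_5+1)^{j}$; each is a unit at every finite prime of $k_0$ (note that $1+\zeta_5$ has norm $1$, so it is a unit in $\mathbb Z[\zeta_5]$). At a prime $\mathcal P\notin\{\lambda,\pi_1,\dots,\pi_g\}$ the conductor $f(\sqrt[5]{n})$ is not divisible by $\mathcal P$, so item~$(4)$ with $v_{\mathcal P}(\beta)=0$ gives $\left(\tfrac{\beta,n}{\mathcal P}\right)_5=1$. At a prime $\pi_i\neq\lambda$ dividing $n$, the auxiliary extension $k_0(\sqrt[5]{\beta})/k_0$ is unramified outside $\lambda$, hence $\pi_i\nmid f(\sqrt[5]{\beta})$; flipping the entries via item~$(3)$ and then applying item~$(4)$ to the swapped symbol gives
\[
\left(\tfrac{\beta,n}{\pi_i}\right)_5=\left(\tfrac{n,\beta}{\pi_i}\right)_5^{-1}=\left(\tfrac{\beta}{\pi_i}\right)_5^{e_i},
\]
which is trivial if and only if $\beta$ is a quintic residue modulo $\pi_i$ (using $\gcd(e_i,5)=1$). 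The product formula of item~$(7)$ then forces the residual symbol at $\lambda$ to equal $1$ as well. Specialising to $\beta=\zeta_5$: its image in $(\mathcal O_{k_0}/\pi_i)^{\ast}\simeq\mathbb F_{N\pi_i}^{\ast}$ has exact order $5$ (because $\pi_i\neq\lambda$), whence $5\mid N\pi_i-1$ holds automatically, and Euler's criterion yields $\zeta_5^{(N\pi_i-1)/5}=1$ if and only if $N_{k_0/\mathbb Q}(\pi_i)\equiv 1\pmod{25}$. This proves (i); (ii) is then already in its claimed quintic-residue form.

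For (iii) the plan is to invoke the classical criterion of local Kummer theory over $k_{0,\lambda}=\mathbb Q_5(\zeta_5)$: with $p=5$, $e=v_\lambda(5)=4$ and $ep/(p-1)=5$, the extension $k_0(\sqrt[5]{n})/k_0$ is unramified at $\lambda$ if and only if $n$ is a fifth power in $(\mathcal O_{k_0}/\lambda^5)^{\ast}$. The computational heart is the congruence
\[
x^5\equiv(x\bmod\lambda)^5\pmod{\lambda^5}\qquad\text{for every }x\in\mathcal O_{k_0},
\]
which follows by writing $x=x_0+\lambda y$ with $x_0\in\{0,1,2,3,4\}$ a Teichm\"uller representative and expanding the binomial: every intermediate term carries a factor $5=-\varepsilon\lambda^4$ and so has $\lambda$-valuation at least $5$, while the final term $\lambda^5 y^5$ already vanishes modulo $\lambda^5$. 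Letting $x_0$ run through $\pm 1,\pm 2\in\mathbb F_5^{\ast}$ produces $\{1,32,243,1024\}\equiv\{\pm 1,\pm 7\}\pmod{\lambda^5}$ as the complete set of fifth powers of units modulo $\lambda^5$. If instead $v_\lambda(n)>0$, then $n\not\equiv\pm 1,\pm 7\pmod{\lambda^5}$ holds automatically, matching the ramification of $\lambda$ in that case. This gives the contrapositive of (iii).

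The only genuinely delicate step is (iii): it rests on the wild-ramification criterion for quintic Kummer extensions at the prime above $5$, to be cited from \cite{Hass}, and on the identification of the fifth-power residues modulo $\lambda^5$ via the congruence $x^5\equiv(x\bmod\lambda)^5$. Parts (i) and (ii) are then a disciplined application of Properties~\ref{normprop} together with Hasse's norm theorem.
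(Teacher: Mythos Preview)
Your argument is correct. The paper itself does not prove this lemma at all: its entire proof reads ``See [\cite{Mani}, Lemma 5.1]''. What you have written is essentially the standard proof one finds behind that citation---Hasse's norm theorem for the cyclic extension $k/k_0$, reduction of the local norm conditions to quintic residue symbols via Properties~\ref{normprop}, Euler's criterion to convert $\left(\frac{\zeta_5}{\pi_i}\right)_5=1$ into $N_{k_0/\mathbb{Q}}(\pi_i)\equiv 1\pmod{25}$, and the local Kummer criterion at the wildly ramified prime $\lambda$ together with the computation of fifth powers modulo $\lambda^5$. The use of the product formula~(7) to dispose of the symbol at $\lambda$ in parts~(i)--(ii) is the right manoeuvre. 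Your treatment of~(iii), including the case $v_\lambda(n)>0$, is also correct; the exponent $pe/(p-1)=5$ is the right threshold, and your binomial computation showing $x^5\equiv x_0^5\pmod{\lambda^5}$ (hence that the fifth powers of units modulo $\lambda^5$ are exactly $\{\pm 1,\pm 7\}$) is accurate.
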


\begin{proof}
See [\cite{Mani}, Lemma 5.1]
\end{proof}

The order of the subgroup $C_{k, 5}^{(\sigma)}$ of ambiguous ideal classes whenever $n$ takes forms of theorem \ref{theo 1.1} is given by the following proposition:
\begin{proposition}\label{prop 3.1}
Let $\Gamma$, $k_0$ and $k$ as above. Let $q_1$, $q_2$ a prime numbers such that\\ $q_1 , q_2 \equiv \pm2 \,(\mathrm{mod}\, 5)$. If $n$ takes one of the following forms
\begin{equation}
 n\,=\,\left\lbrace
   \begin{array}{ll}
   
     q_1^{e_1}q_2 & \text{ with } \quad q_i\,\equiv \,\pm7\, (\mathrm{mod}\, 25)\\

     5q_1  & \text{ with }\quad q_1\,\equiv \,\pm7\, (\mathrm{mod}\, 25)\\
   
     5q_1q_2 & \text{ with } \quad q_1 \,\text{ or }\, q_2$  $\not\equiv \,\pm7\, (\mathrm{mod}\, 25)\\

   \end{array}
   \right.
\end{equation}
Then the subgroup $C_{k, 5}^{(\sigma)}$ of ambiguous ideal classes is cyclic of order $5$.
\end{proposition}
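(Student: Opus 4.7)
The strategy is to use Hasse's rank formula $\mathrm{rank}\, C_{k,5}^{(\sigma)} = d + q^\ast - 3$ from the excerpt, combined with Chevalley's order formula (which applies because $h_{k_0, 5} = 1$ as $\mathbb{Q}(\zeta_5)$ has class number one), to show that $C_{k,5}^{(\sigma)}$ has order exactly $5$ in each case. Concretely I must verify $(d, q^\ast) = (2, 2), (2, 2), (3, 1)$ respectively in Cases 1, 2, 3, so that $d + q^\ast - 3 = 1$.

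\textbf{Counting $d$.} Any rational prime $q \equiv \pm 2 \pmod 5$ has order $4$ in $(\mathbb{Z}/5\mathbb{Z})^\ast$ and is therefore inert in $k_0/\mathbb{Q}$; each such $q_i \mid n$ contributes one prime of $k_0$ ramifying in $k/k_0$. For the prime $\lambda = 1-\zeta_5$, bullet (3) of Lemma \ref{lem 3.1} translates, via the observation $\lambda^5 \cap \mathbb{Z} = 25\mathbb{Z}$, into a congruence for $n$ modulo $25$. Using $7^2 \equiv -1 \pmod{25}$, one obtains $n \in \{\pm 1, \pm 7\} \pmod{25}$ in Case 1 (so $\lambda$ is unramified, $d = 2$), $n \equiv \pm 10 \pmod{25}$ in Case 2 (so $\lambda$ ramifies, $d = 2$), and $5 \mid n$ while $25 \nmid n$ in Case 3 (so $\lambda$ ramifies, $d = 3$).

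\textbf{Computing $q^\ast$.} Recall $q^\ast = \dim_{\mathbb{F}_5}\bigl((N_{k/k_0}(k^\ast) \cap E_{k_0})/E_{k_0}^5\bigr)$, where $E_{k_0}/E_{k_0}^5$ is generated by the classes of $\zeta_5$ and $1+\zeta_5$. Bullet (1) of Lemma \ref{lem 3.1} says $\zeta_5$ is a global norm iff $q_i^4 \equiv 1 \pmod{25}$ for every $q_i \mid n$. In Cases 1 and 2 the hypothesis $q_i \equiv \pm 7 \pmod{25}$ forces $q_i^4 \equiv 1 \pmod{25}$, so $\zeta_5$ is a norm; in Case 3 at least one $q_i$ violates this congruence, so $\zeta_5$ is not a norm and automatically $q^\ast \leq 1$.

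\textbf{The main obstacle: $1+\zeta_5$.} What remains, and is the delicate core of the proof, is to verify via bullet (2) of Lemma \ref{lem 3.1} that $1+\zeta_5$ is a quintic residue modulo each $\pi_i$ in Cases 1 and 2, and that some nontrivial combination $\zeta_5^i(1+\zeta_5)^j$ is a quintic residue modulo each $\pi_i$ in Case 3. My plan is to write the quintic residue symbol $\bigl(\tfrac{1+\zeta_5}{\pi_i}\bigr)_5$, apply the reciprocity law (Property (9) of Section 2) to convert it into $\bigl(\tfrac{\pi_i}{1+\zeta_5}\bigr)_5$, and then exploit $N_{k_0/\mathbb{Q}}(1+\zeta_5) = \Phi_5(-1) = 1$ together with the product formula (Property (7)) to reduce the evaluation to a controlled set of local factors governed by $q_i \pmod{25}$. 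In Cases 1--2 the hypothesis $q_i \equiv \pm 7 \pmod{25}$ should make every remaining factor trivial, yielding $q^\ast = 2$; in Case 3 the failure of this congruence for exactly one $q_i$, together with a parity argument on exponents $(i,j)$, should pin down a single nontrivial norm combination and yield $q^\ast = 1$. The book-keeping at the ramified prime $\lambda$ in Cases 2 and 3 is the finicky step: its contribution to the product formula must be matched against the contributions from the $\pi_i$ so that the right unit combination lands in $N_{k/k_0}(k^\ast)$.
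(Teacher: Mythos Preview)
Your overall strategy is exactly the paper's: show $\mathrm{rank}\,C_{k,5}^{(\sigma)}=d+q^\ast-3=1$ by verifying $(d,q^\ast)=(2,2),(2,2),(3,1)$ in the three cases. Your computation of $d$ and of the $\zeta_5$-part of $q^\ast$ matches the paper essentially verbatim.

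The only divergence is in how the $1+\zeta_5$ contribution is handled. The paper is in fact \emph{less} detailed than your sketch: in Cases~1--2 it simply asserts ``by the same reasoning $\zeta_5+1\in N_{k/k_0}(k^\ast)$'' without spelling out which clause of Lemma~\ref{lem 3.1} is meant, and in Case~3 it does not argue at all but cites the proof of \cite[Theorem~5.13]{Mani} for $q^\ast=1$. Your reciprocity plan is a genuine attempt to fill this in. In Cases~1--2 it works more directly than you anticipate: since $q_i\equiv\pm7\pmod{25}$, bullet~(3) of Lemma~\ref{lem 3.1} shows $\lambda$ is unramified in $k_0(\sqrt[5]{q_i})/k_0$, so the conductors $f(\sqrt[5]{q_i})$ and $f(\sqrt[5]{1+\zeta_5})$ (the latter a power of $\lambda$) are coprime, Property~(9) applies, and because $(1+\zeta_5)$ is the unit ideal the right-hand side $\bigl(\tfrac{q_i}{(1+\zeta_5)}\bigr)_5$ is the empty product $=1$; no product-formula bookkeeping is needed. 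In Case~3 this shortcut fails for the bad $q_i$ (its conductor now meets $\lambda$), and there your sketch and the paper are on equal footing---neither gives a self-contained argument, both ultimately resting on the analysis in \cite{Mani}.
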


\begin{proof}
We will successively treat the three forms to calculate $d$ and $q^*$ defined before, in order to show that rank $C_{k, 5}^{(\sigma)} = 1$.\\
We note that if $q$ is a prime of $\mathbb{Z}$ such that $q \equiv \pm2 \,(\mathrm{mod}\, 5)$, then by [\cite{washint}, Theorem 2.13], $q$ is remain inert in $k_0$ and $N_{k_0/\mathbb{Q}}(q) = q^4$.\\
Since $k$ is the normal closure of $\Gamma$, then $k = k_0(\sqrt[5]{n})$. We see that if a prime $p$ of $\mathbb{Z}$ is ramified in $\Gamma$, then all primes of $k_0$ above $p$ are ramified in $k$. Since in proposition \ref{prop 3.1} we deal with primes $q_i \equiv \pm2 \,(\mathrm{mod}\, 5)$, $(i = 1, 2)$, which are inert in $k_0$, so if $q_i$ are ramified in $\Gamma$, then they are ramified in $k$ too.\\

$\bullet$ If $n = q_1q_2$ such that $q_i \equiv \pm7 \,(\mathrm{mod}\, 25)$, then $n \equiv \pm1 \,(\mathrm{mod}\, 25)$ and by lemma \ref{lem 3.1}, we have that $\lambda$ is not ramified in $k/k_0$. The primes $q_i$ are ramified in $k$, because $disk(\Gamma/\mathbb{Q}) = 5^5n^4$ and $q_i$ divide this discriminant. Hence we have $d = 2$.\\
Since $q_i \equiv \pm7 \,(\mathrm{mod}\, 25)$, then $N_{k/\mathbb{Q}}(q) = q^4 \equiv 1 \,(\mathrm{mod}\, 25)$ and by lemma \ref{lem 3.1} we have $\zeta_5\, \in N_{k/k_0}(k-\{0\})$. By the same reasoning we have that $\zeta_5+1\, \in N_{k/k_0}(k-\{0\})$, so $q^* = 2$, whence rank $C_{k, 5}^{(\sigma)} = 1$.

$\bullet$ If $n = 5q_1$ such that $q_1 \equiv \pm7 \,(\mathrm{mod}\, 25)$, we have $n \not\equiv \pm1,\pm7 \,(\mathrm{mod}\, 25)$, so $\lambda$ is ramified in $k/k_0$. As before we have $q_1$ is ramified also in $k/k_0$, whence $d = 2$.\\
Since $q_1 \equiv \pm7 \,(\mathrm{mod}\, 25)$, we proceed as the previous point to prove that $q^* = 2$. Hence rank $C_{k, 5}^{(\sigma)} = 1$.

$\bullet$ If $n = 5q_1q_2$ such that $q_1$ or $ q_2 \not\equiv \pm7 \,(\mathrm{mod}\, 25)$. We see that $\lambda, q_1$ and $q_2$ are ramified in $k/k_0$ so $d = 3$. Since $q_1$ or $ q_2 \not\equiv \pm7 \,(\mathrm{mod}\, 25)$, and by lemma \ref{lem 3.1} we have that $\zeta_5\, \not\in N_{k/k_0}(k-\{0\})$, which imply that $q^* < 2$. Precisely, according to the proof of [\cite{Mani}, Theorem 5.13], we have $q^* = 1$. Hence rank $C_{k, 5}^{(\sigma)} = 1$.\\
We proved that for the three forms of the natural $n$, we have rank $C_{k, 5}^{(\sigma)} = 1$, which means that $C_{k, 5}^{(\sigma)}$ is a cyclic subgroup of $C_{k, 5}$ of order $5$.
\end{proof}

\section{Proof of main theorem}
Let $k$ be the normal closure of a pure quintic field $\Gamma = \mathbb{Q}(\sqrt[5]{n})$, where $n$ takes the forms mentioned above, also $k$ is cyclic Kummer extension of degree $5$ of the cyclotomic field\\ $k_0 = \mathbb{Q}(\zeta_5)$. We put $Gal(k/k_0) = \langle \sigma \rangle$. By $k_5^{(1)}$ we denote the Hilbert $5$-class field of $k$, that is the maximal abelian unramified extension of $k$ of degree a power of $5$. By class field theory $C_{k, 5} \simeq Gal(k_5^{(1)}/k)$.\\
Next we define the genus field of $k/k_0$, which we denote by $k^*$, to be the maximal abelian extension of $k_0$ contained in $k_5^{(1)}$. Then using the isomorphism $C_{k, 5} \simeq Gal(k_5^{(1)}/k)$, we see that $Gal(k_5^{(1)}/k^*)$ can be identified with a subgroup of $C_{k, 5}$, which is called the principal genus. By class field theory this subgroup is $C_{k, 5}^{1-\sigma}$. Its easy to see that $Gal(k^{*}/k) \simeq C_{k, 5}/C_{k, 5}^{1-\sigma}$, and we have the following lemma:
\begin{lemma}\label{lem 4.1}
Let $k, C_{k, 5}, C_{k, 5}^{(\sigma)}$ and $C_{k, 5}^{1-\sigma}$ as above. Then we have:\\
\centerline{rank $C_{k, 5}^{(\sigma)} = $ rank $C_{k, 5}/C_{k, 5}^{1-\sigma}$ = $1$}
\end{lemma}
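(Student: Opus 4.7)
The plan is to derive Lemma 4.1 as a formal consequence of Proposition 3.1 together with the basic first-isomorphism-theorem argument that relates invariants and coinvariants of the $\langle\sigma\rangle$-action on the finite $5$-group $C_{k,5}$.

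First I would view $1-\sigma$ as an endomorphism of $C_{k,5}$. By definition its kernel is the ambiguous class group $C_{k,5}^{(\sigma)}$, and its image is exactly $C_{k,5}^{1-\sigma}$, so the first isomorphism theorem yields the short exact sequence
\[
1 \longrightarrow C_{k,5}^{(\sigma)} \longrightarrow C_{k,5} \xrightarrow{\,1-\sigma\,} C_{k,5}^{1-\sigma} \longrightarrow 1.
\]
Hence $|C_{k,5}| = |C_{k,5}^{(\sigma)}|\cdot|C_{k,5}^{1-\sigma}|$, and comparing with the trivial identity $|C_{k,5}| = |C_{k,5}/C_{k,5}^{1-\sigma}|\cdot|C_{k,5}^{1-\sigma}|$ forces the order equality
\[
\big|C_{k,5}/C_{k,5}^{1-\sigma}\big| \;=\; \big|C_{k,5}^{(\sigma)}\big|.
\]

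Next I would invoke Proposition 3.1, which, in each of the three cases of Theorem \ref{theo 1.1}, asserts that $C_{k,5}^{(\sigma)}$ is cyclic of order $5$. The order identity above then forces $C_{k,5}/C_{k,5}^{1-\sigma}$ to have order $5$ as well, so it is itself cyclic of order $5$. Both groups therefore have $5$-rank equal to $1$, which is precisely what the lemma asserts.

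Since Proposition 3.1 has already absorbed all the arithmetic content (counting the ramified primes $d$ and computing $q^\ast$ via Lemma \ref{lem 3.1} for each admissible shape of $n$), no genuine obstacle remains in the present lemma; the only non-trivial ingredient is the purely group-theoretic order-comparison step displayed above, so the proof will read as a one-paragraph corollary rather than a stand-alone argument.
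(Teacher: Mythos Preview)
Your argument is correct and follows essentially the same two-step skeleton as the paper: establish that the invariants $C_{k,5}^{(\sigma)}$ and the coinvariants $C_{k,5}/C_{k,5}^{1-\sigma}$ have the same size, then invoke Proposition~\ref{prop 3.1} to pin both down as cyclic of order $5$. The only cosmetic difference is that the paper outsources the first step to Gerth's Lemma~2.3 (noting that $k_0$ has trivial class group so that lemma applies verbatim), whereas you write out the first-isomorphism-theorem order comparison explicitly; your version is self-contained and in fact does not even need the triviality of the class group of $k_0$, since you compare orders rather than ranks and only extract the rank at the end from the known order $5$.
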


\begin{proof}
Since the $5^{th}$ cyclotomic field $k_0$, has a trivial class group, we can admit the same proof of [\cite{GER1}, Lemma 2.3], and by proposition \ref{prop 3.1}, we have rank $C_{k, 5}^{(\sigma)} = 1$ 
\end{proof}
From lemma \ref{lem 4.1}, we deduce that rank $Gal(k^{*}/k) =$ rank $C_{k, 5}^{(\sigma)} = 1$, which means that $[k^* : k] = 5$. By Kummer theory, there exist $x_1 \in k$ such that $k^* = k(\sqrt[5]{x_1})$.\\
In [\cite{Mani}, section 6], we find an investigation of the $5$-class group of pure quintic field $\Gamma$, by giving a upper bound of its rank as follows:\\
\centerline {rank $C_{\Gamma, 5} \leq min\{t,\, t-s + rank (C_{k, 5}/C_{k, 5}^{1-\sigma})^+\}$},
where $t$ is the rank of $C_{k, 5}^{(\sigma)}$, $s = \mathrm{rank} (C_{k, 5}^{(\sigma)}.C_{k, 5}^{1-\sigma})/C_{k, 5}^{1-\sigma}$ and $(C_{k, 5}/C_{k, 5}^{1-\sigma})^+$ is defined as [\cite{Mani}, Lemma 6.1].\\
We note that according to [\cite{Mani}, Theorem 6.6], if the natural $n$ is not divisible by any prime $p \,\equiv\, 1\, (\mathrm{mod}\, 5)$, we have that $rank (C_{k, 5}/C_{k, 5}^{1-\sigma})^+ = 0$, which is verified in our situation, whence the upper bound of rank $C_{\Gamma, 5}$ becomes $t-s$.\\
The following theorem allows us to compute the value of $s$ in terms of rank of matrix with entries in $\mathbb{F}_5$ the finite field of $5$ elements.

\begin{theorem}\label{theo 4.1}
Let $k_0 = \mathbb{Q}(\zeta_5)$ and $k = k_0(\sqrt[5]{n})$, where $n$ takes forms as above and decompose in $k_0$ as $n = u\lambda^{e_\lambda}\pi_1^{e_1}...\pi_g^{e_g}$, with $u$ is a unit, $\lambda = 1-\zeta_5$ the unique prime in $k_0$ above $5$, $\pi_i $ primes of $k_0$, $e_\lambda \in \{0, 1, 2, 3, 4\}$ and $e_i \in \{1, 2, 3, 4\}$. Let $k^* = k(\sqrt[5]{x_1})$ be the genus field of $k/k_0$.\\
Let $\alpha_{1j} \in \mathbb{F}_5$ such that
\begin{center}
$\zeta_5^{\alpha_{1j}}$ = \large{$\left(\frac{x_1, n}{\pi_j} \right)_5$} for $1 \leq j \leq g$\\
\vspace{0.4cm}
\hspace{1.6cm} $\zeta_5^{\alpha_{1(g+1)}}$ = \large{$\left(\frac{x_1, \lambda}{\lambda} \right)_5$} if $\lambda$ is ramified in $k/k_0$.\\
\end{center}
If $M$ is the matrix $(\alpha_{1j})$, then $s = \mathrm{rank}\, M$.
\end{theorem}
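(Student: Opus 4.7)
The plan is to identify $s$ with the dimension of the image of an Artin-type map and then translate the computation of this image into the language of norm residue symbols on the base field $k_0$.

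\textbf{Step 1 (setup).} By definition, $s$ is the $\mathbb{F}_5$-dimension of the image of the natural map
\[
\phi : C_{k,5}^{(\sigma)} \longrightarrow C_{k,5}/C_{k,5}^{1-\sigma}.
\]
Artin reciprocity identifies the target with $\mathrm{Gal}(k^*/k)$, which is cyclic of order $5$ by Lemma~\ref{lem 4.1}. Since $k^*/k_0$ is abelian and $k_0$ contains $\zeta_5$, Kummer theory lets us take $x_1 \in k_0^*$, and I identify $\mathrm{Gal}(k^*/k)$ with $\mathbb{F}_5$ via $\tau \mapsto a$ whenever $\tau(\sqrt[5]{x_1}) = \zeta_5^a\sqrt[5]{x_1}$. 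The ambiguous classes are generated, modulo principal ones (which die in $\phi$), by the ramified primes of $k/k_0$, namely the unique prime $\mathfrak{P}_j$ above $\pi_j$ for $1 \leq j \leq g$ and, when $\lambda$ is ramified, the unique prime $\mathfrak{L}$ above $\lambda$. Hence the image of $\phi$ is spanned by the values $\phi([\mathfrak{P}_j])$ (and $\phi([\mathfrak{L}])$ if applicable), and $s$ is the $\mathbb{F}_5$-rank of this collection.

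\textbf{Step 2 (finite primes $\pi_j$).} Write $\phi([\mathfrak{P}_j]) = a_j \in \mathbb{F}_5$. Since $k^*/k$ is unramified its conductor is trivial, so $\mathfrak{P}_j$ does not divide it; Property~\ref{normprop}(4) with $\alpha = x_1$, $\beta = \sqrt[5]{n}$, and $v_{\mathfrak{P}_j}(\sqrt[5]{n}) = e_j$ gives
\[
\left(\frac{\sqrt[5]{n}, x_1}{\mathfrak{P}_j}\right)_5 = \left(\frac{x_1}{\mathfrak{P}_j}\right)_5^{-e_j} = \zeta_5^{-e_j a_j}.
\]
On the other hand $N_{k/k_0}(\sqrt[5]{n}) = n$, and $\pi_j$ is totally ramified in $k/k_0$ (one prime $\mathfrak{P}_j$ above). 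Hence Property~\ref{normprop}(8) collapses to a single factor and, combined with Property~\ref{normprop}(3), gives
\[
\left(\frac{\sqrt[5]{n}, x_1}{\mathfrak{P}_j}\right)_5 = \left(\frac{n, x_1}{\pi_j}\right)_5 = \left(\frac{x_1, n}{\pi_j}\right)_5^{-1} = \zeta_5^{-\alpha_{1j}}.
\]
Comparing exponents yields $\alpha_{1j} \equiv e_j\, a_j \pmod{5}$; as $e_j \in \{1,2,3,4\}$ is invertible modulo $5$, the vectors $(a_j)_j$ and $(\alpha_{1j})_j$ span the same subspace of $\mathbb{F}_5$. When $\lambda$ is unramified this already yields $s = \mathrm{rank}\, M$.

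\textbf{Step 3 (the prime $\lambda$ and main obstacle).} When $\lambda$ is ramified in $k/k_0$ the class $[\mathfrak{L}]$ contributes an extra generator whose image $\phi([\mathfrak{L}])$ must be matched with $\alpha_{1(g+1)} = \left(\frac{x_1,\lambda}{\lambda}\right)_5$. The strategy of Step~2 cannot be copied verbatim, because although $k^*/k$ is unramified, the extension $k_0(\sqrt[5]{x_1})/k_0$ may itself be ramified at $\lambda$, so the conductor hypothesis of Property~\ref{normprop}(4) requires care at $\lambda$. The plan is to invoke the product formula Property~\ref{normprop}(7) applied to $\left(\frac{x_1,n}{\cdot}\right)_5$ over all primes of $k_0$: outside the ramified set the symbols are trivial, and the splitting $n = u\lambda^{e_\lambda}\prod \pi_i^{e_i}$ together with bimultiplicativity (Properties~\ref{normprop}(1)--(2)) reduces the $\lambda$-contribution to $\left(\frac{x_1,\lambda}{\lambda}\right)_5$ modulo the symbols $\left(\frac{x_1,\pi_i}{\lambda}\right)_5$, which vanish because the $\pi_i$ are $\lambda$-adic units, placing us in an unramified local situation. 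Combining this with Step~2 shows that the image of $\phi$ is exactly the row-span of $M$, so $s = \mathrm{rank}\, M$. The main technical difficulty is precisely this bookkeeping at $\lambda$, where wild ramification and the special role of $\lambda = 1 - \zeta_5$ above $5$ force a separate local analysis distinct from the one used at the $\pi_j$'s.
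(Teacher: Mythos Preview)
The paper does not prove this statement at all: its entire proof is the single line ``See [\cite{Mani}, Theorem~5.10] for $t=1$''. Your proposal is therefore not competing with an argument in the paper but rather attempting to supply one where the paper simply quotes the literature.

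Your Steps~1--2 have the right architecture, and the computation in Step~2 relating $\left(\frac{x_1}{\mathfrak P_j}\right)_5$ to $\left(\frac{x_1,n}{\pi_j}\right)_5$ via Properties~(4), (8) and (3) is clean and correct. Two genuine gaps remain, however. First, in Step~1 you assert that $C_{k,5}^{(\sigma)}$ is generated by the classes of the ramified primes. What the ramified primes generate are the \emph{strongly} ambiguous classes (classes represented by $\sigma$-invariant ideals); the full ambiguous class group can be strictly larger, the discrepancy being governed by a unit index of the type $[E_{k_0}\cap N_{k/k_0}(k^\times):N_{k/k_0}(E_k)]$. You need an argument that either the two coincide here or that the extra classes already lie in $C_{k,5}^{1-\sigma}$, so that they do not enlarge the image of $\phi$.

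Second, and more seriously, your Step~3 claim that $\left(\frac{x_1,\pi_i}{\lambda}\right)_5=1$ ``because the $\pi_i$ are $\lambda$-adic units, placing us in an unramified local situation'' is not valid. At the prime $\lambda$ above~$5$ one is in the wildly ramified regime: for a $\lambda$-unit $u$, the local extension $k_{0,\lambda}(\sqrt[5]{u})/k_{0,\lambda}$ is unramified only when $u$ satisfies a congruence of the shape $u\equiv 1 \pmod{\lambda^5}$ up to fifth powers, which you have not verified for the $\pi_i$ (and which is generally false). Consequently neither the direct vanishing you assert nor the product-formula reduction you sketch goes through without substantially more local analysis at~$\lambda$. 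This is exactly the ``bookkeeping at $\lambda$'' you flag as the main difficulty, and it is not resolved in your write-up; the argument in \cite{Mani} handles it by a separate mechanism that your outline does not reproduce.
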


\begin{proof}
See [\cite{Mani}, Theorem 5.10] for $t = 1$
\end{proof}

In the remainder, we compute the value of $s$ for the three forms of the natural $n$.\\
By the definition of the matrix $M$ in theorem \ref{theo 4.1}, we can see that $s = \mathrm{rank}\, M = 1$ if and only if one $\alpha_{1j} \neq 0$.

$\bullet$ If $n = q_1q_2$ such that $q_i \equiv \pm7 \,(\mathrm{mod}\, 25)$, $(i = 1, 2)$, According to [\cite{Limura}, Lemma 3.3], we have that $k^* = k(\sqrt[5]{q_1})$, so we put $x_1 = q_1$ and since $q_i$, $(i = 1, 2)$, are inert in $k_0$, we put $\pi_1 = q_1$, then we can get the value of $\alpha_{11}$ by calculus of \large{$\left(\frac{x_1, n}{\\pi_1} \right)_5$} $=$ \large{$\left(\frac{q_1,\, q_1q_2}{q_1} \right)_5$}. We have\\
\large{$\left(\frac{q_1,\, q_1q_2}{q_1} \right)_5$} = \large{$\left(\frac{q_1, q_1}{q_1} \right)_5$}\large{$\left(\frac{q_1, q_2}{q_1} \right)_5$} by $(1)$, $(2)$ of properties \ref{normprop}.\\\\
\vspace{0.5cm}
and since\\
\vspace{0.5cm}
\large{$\left(\frac{q_1, q_1}{q_1} \right)_5$} = $1$ because $q_1$ is norm in $k_0(\sqrt[5]{q_1})/k_0$.\\
\large{$\left(\frac{q_1, q_2}{q_1} \right)_5$} = \large{$\left(\frac{q_2}{q_1} \right)_5^{-1}$} by $(4)$ of properties \ref{normprop}.\\
we deduce that $\zeta_5^{\alpha_{11}}$ = \large{$\left(\frac{q_2}{q_1} \right)_5^{-1}$}  $\neq 1$ because $q_2$ is not a quintic residue modulo $q_1$, hence $\alpha_{11} \neq 0$, which imply that $s = 1$.

$\bullet$ If $n = 5q_1$ such that $q_1 \equiv \pm7 \,(\mathrm{mod}\, 25)$, by [\cite{Limura}, Lemma 3.3], we have that $k^* = k(\sqrt[5]{q_1})$, so we admit the same reasoning as the previous point. Its sufficient to replace $q_2$ by $5$. Hence we have $s= 1$.

$\bullet$ If $n = 5q_1q_2$ such that $q_1$ or $q_2 \not\equiv \pm7 \,(\mathrm{mod}\, 25)$, and without loosing generality we can assume that $q_2 \not\equiv \pm7 \,(\mathrm{mod}\, 25)$. We have that $k^* = k(\sqrt[5]{5q_2})$. Put $x_1 = 5q_2$ and $\pi_1 = q_1$, then we calculate  \large{$\left(\frac{x_1, n}{\\pi_1} \right)_5$} $=$ \large{$\left(\frac{5q_2,\, 5q_1q_2}{q_1} \right)_5$}. We have\\
\large{$\left(\frac{5q_2,\, 5q_1q_2}{q_1} \right)_5$} = \large{$\left(\frac{5, 5}{q_1} \right)_5$}\large{$\left(\frac{5, q_1}{q_1} \right)_5$}\large{$\left(\frac{5, q_2}{q_1} \right)_5$}\large{$\left(\frac{q_2, 5}{q_1} \right)_5$}\large{$\left(\frac{q_2, q_1}{q_1} \right)_5$}\large{$\left(\frac{q_2, q_2}{q_1} \right)_5$} by $(1)$, $(2)$ of properties \ref{normprop}.\\\\
\vspace{0.5cm}
and since\\
\vspace{0.5cm}
\large{$\left(\frac{5, 5}{q_1} \right)_5$} = $1$ by $(2)$ and $(4)$ of properties \ref{normprop}.\\
\large{$\left(\frac{5, q_1}{q_1} \right)_5$} = \large{$\left(\frac{5}{q_1} \right)_5$} $\neq 1$ by $(4)$ of properties \ref{normprop}.\\
\large{$\left(\frac{5, q_2}{q_1} \right)_5$}\large{$\left(\frac{q_2, 5}{q_1} \right)_5$} $= 1$ by $(3)$ of properties \ref{normprop}.\\
\large{$\left(\frac{q_2, q_1}{q_1} \right)_5$} = \large{$\left(\frac{q_2}{q_1} \right)_5$} $\neq 1$ by $(4)$ of properties \ref{normprop}.\\
\large{$\left(\frac{q_2, q_2}{q_1} \right)_5$} = $1$ by $(2)$ and $(4)$ of properties \ref{normprop}.\\
We deduce that $\zeta_5^{\alpha_{11}} \neq 1$, because $q_2$ and $5$ are not a quintic residue modulo $q_1$, which imply that $s = 1$.\\

In summary, we proved that for the three forms of the natural $n$ we have $s = 1$, and by proposition \ref{prop 3.1}, we have $t = 1$, whence the upper bound of rank $C_{\Gamma, 5}$ is $0$, which means that $\Gamma$ admits a trivial $5$-class number.\\
To finish the proof, we use the results of C. Parry in \cite{Pa}, which states that $5$ divides $h_\Gamma$ if and only if $5^2$ divides $h_k$. The fact that $\Gamma$ admits a trivial $5$-class number, imply that $5^2$ does not divides $h_k$, but we proved that rank $C_{k,5}^{(\sigma)} = 1$, then there is exact divisibility of $h_k$ by $5$.

\section{Numerical examples}
Let $\Gamma\,=\,\mathbb{Q}(\sqrt[5]{n})$ be a pure quintic field and $k$ its normal closure. Using the system PARI/GP \cite{PRI}, we compute the $5$-class number of $\Gamma$ and $k$ for each forms of the natural $n$. The following tables illustrate out main results theorem \ref{theo 1.1}.

\begin{center}

Table 1: $n\,=\, q_1q_2$ with $q_i\,\equiv \,\pm7\, (\mathrm{mod}\, 25)$\\
\begin{tabular}{|c|c|c|c|c|c|c|}
\hline 
$q_1$  & \small$q_1\,(\mathrm{mod}\,25)$& $q_2$ &  \small$q_2\,(\mathrm{mod}\,25)$& $n\,=\,q_1q_2$ & $h_{k,5}$ &   $h_{\Gamma,5}$\\ 
\hline 
7  & 7 & 43  & -7 & 301  & 5  & 1 \\
7  & 7 & 193  & -7 & 1351  & 5  & 1 \\ 
7  & 7 & 293  & -7 & 2051  & 5   & 1 \\ 
107  & 7 & 43  & -7 & 4601  & 5  & 1 \\
157  & 7 & 43 & -7 & 6751  & 5 &  1 \\
457  & 7 & 43 & -7 &   19651  & 5 &  1 \\
107  & 7 & 193  & -7 & 20651  & 5 &  1 \\
557  & 7 & 43 & -7 &   23251  & 5 &  1 \\
607  & 7 & 43 & -7 &   26101  & 5 &  1 \\
157  & 7 & 193  & -7 & 30301 & 5 &  1 \\
107  & 7 & 293  & -7 & 31351  & 5 &  1 \\
757  & 7 & 43  & -7 & 32551  & 5 &  1 \\
857  & 7 & 43  & -7 & 36851 & 5 &  1 \\
907  & 7 & 43  & -7 & 39001  & 5 &  1 \\
107  & 7 & 443  & -7 & 47401  & 5 &  1 \\
257  & 7 & 193  & -7 & 49601  & 5 &  1 \\
307  & 7 & 193  & -7 & 59251  & 5 &  1 \\
157  & 7 & 443  & -7 & 69551  & 5 &  1 \\
257  & 7 & 293  & -7 & 75301  & 5 & 1 \\
457  & 7 & 443  & -7 & 202451 & 5 &  1 \\

\hline 
\end{tabular} 
\end{center}
\newpage
\begin{center}
 Table 2 : $n\,=\, 5q_1$ with $q_1\,\equiv \,\pm7\, (\mathrm{mod}\, 25)$\\
\begin{tabular}{|c|c|c|c|c|}
\hline 
$q_1$ &  $q_1\,(\mathrm{mod}\,25)$ &$n\,=\,5q_1$ & $h_{k,5}$  & $h_{\Gamma,5}$ \\ 
\hline 
7 &7 &     35    & 5 &  1 \\ 
43 &  -7 & 215  &5 &  1 \\ 
107 &  7 & 535   & 5 &  1 \\ 
157 &  7 & 785   & 5 &  1 \\  
193 &  -7& 965   & 5 & 1 \\
257 &  7 & 1285  & 5 &  1 \\ 
293 & -7 & 1465   &  5 &  1 \\ 
307 &  7 & 1535   & 5 &  1 \\ 
443 & -7 & 2215  & 5 &  1 \\ 
457 &  7 & 2285  & 5 &  1 \\ 
557 & 7 &  2785  &  5 &  1 \\ 
607 &  7 & 3053  &5 &  1 \\ 
643 &  -7& 3215   & 5 &  1 \\ 
757 &  7 & 3785  & 5 &  1 \\ 
857 &  7 & 4285  & 5 &  1 \\ 
907 &  7 & 4535  &5 &  1 \\ 
\hline
\end{tabular}
\end{center}
\vspace{0.1cm}
\begin{center}
Table 3: $n\,=\, 5q_1q_2$ with $q_1$ or $q_2$  $\not\equiv \,\pm7\, (\mathrm{mod}\, 25)$\\
\begin{tabular}{|c|c|c|c|c|c|c|}
\hline 
$q_1$ & \small$q_1\,(\mathrm{mod}\,25)$& $q_2$ & \small$q_2\,(\mathrm{mod}\,25)$& $n\,=\,5q_1q_2$ & $h_{k,5}$ &  $h_{\Gamma,5}$\\ 
\hline 
2  & 2 & 3  & 3 &          30  & 5 &  1 \\
7  & 7 & 3 & 3 &         105  & 5 &  1 \\ 
2  & 2 & 13  & 13 &       130  & 5 &  1 \\
2  & 2 & 23  & -2 &       230  & 5 &  1 \\
17  & 17 & 3 & 3 &       255  & 5 &  1 \\
2  & 2 & 53  & 3 &        530  & 5 &  1 \\
37  & 17 & 3 & 3 &       555 & 5 &  1 \\
47  & -3 & 3 & 3 &       705  & 5 & 1 \\
67 & 17 & 3 & 3 &        1005  & 5 &  1 \\
17  & 17 & 23  & -2 &    1955  & 5 &  1\\
37  & 17 & 13  & 13 &    2405 & 5 &  1 \\
47  & -3 & 13  & 13 &    3055  & 5 &  1 \\
47  & -3 & 23  & -2 &    5405  & 5 &  1 \\
47  & -3 & 43  & -7 &    10105  & 5 &  1 \\
107  & 7 & 23  & -2 &    12305  & 5 &  1 \\
67 & 17 & 53 & 3 &       17755  & 5 &  1 \\
97 & -3 & 43 & -7 &      20855  & 5 & 1 \\
\hline 
\end{tabular}
\end{center}


\end{document}